\newtheorem{theorem}{Theorem}
\newtheorem{lemma}{Lemma}
\newcommand{\Tr}{{\rm Tr\,}}
\theoremstyle{definition}
\newtheorem{definition}{Definition}
\begin{document}
\title[Verification by stochastic Perron's method]{Verification by stochastic Perron's method in stochastic exit time control problems}

\author{Dmitry B. Rokhlin}

\address{D.B. Rokhlin,
Faculty of Mathematics, Mechanics and Computer Sciences,
              Southern Federal University, 
Mil'chakova str., 8a, 344090, Rostov-on-Don, Russia}          
\email{rokhlin@math.rsu.ru}                   

\begin{abstract} We apply the Stochastic Perron method, created by Bayraktar and S\^irbu, to a stochastic exit time control problem. Our main assumption is the validity of the Strong Comparison Result for the related Hamilton-Jacobi-Bellman (HJB) equation. Without relying on Bellman's optimality principle we prove that inside the domain the value function is continuous and coincides with a viscosity solution of the Dirichlet boundary value problem for the HJB equation.
\end{abstract}
\subjclass[2010]{93E20, 49L25, 60H30}
\keywords{Stochastic optimal control, verification, viscosity solution, exit time,  comparison result}

\maketitle

\section{Introduction} 
\label{sec:1}
As is known, the value function of a controlled diffusion problem can be characterized as a viscosity solution of the corresponding Hamilton-Jacobi-Bellman (HJB) equation. Usually a related reasoning is based on Bellman's dynamic programming principle (DPP). However, a proof of this principle may be a separate difficult problem. Recently Bayraktar and S\^irbu \cite{BaySir12a}, \cite{BaySir12b} proposed an alternative approach, whose starting point is the notions of stochastic sub- and supersolutions, estimating the value function $v$ from below and above:
\begin{equation} \label{eq:1.1}
u\le v\le w.
\end{equation}
The exact definitions depend on the problem, but the idea (going back to Stroock and Varadhan \cite{StrVar72}) is to generate sub- and supermartingale-like processes by the superposition of $u$ and $w$ with the state process. 

The subsequent argumentation can be loosely described as follows. Denote by $\mathcal V^{-}$, $\mathcal V^{+}$ the sets of stochastic sub- and supersolutions. From (\ref{eq:1.1}) we get
\begin{equation} \label{eq:1.2}
u_{-}(x)=\sup_{u\in\mathcal V^{-}} u(x)\le v(x)\le\inf_{w\in\mathcal V^{+}} w(x)=w_{+}(x).
\end{equation}
The main point is to prove that $u_-$ is a viscosity supersolution and $w_{+}$ is a viscosity subsolution of the boundary value problem for the associated HJB equation. Then, a comparison result ensures the reverse inequality $u_{-}\ge w_{+}$, and we can conclude that a unique (continuous) viscosity solution coincides with $v$.

The set  $\mathcal V_{-}$ is directed upward, that is, it is closed with respect to the pointwise maximum operation. Similarly, $\mathcal V_{+}$ is directed downward, and the whole scheme reminds the classical Perron method from the theory of harmonic functions. For the viscosity solutions the Perron method was developed by Ishii \cite{Ish87} (see also \cite{CraIshLio92}). But Ishii's construction is based on viscosity (not stochastic) semi-solutions and do not provide  inequalities like (\ref{eq:1.2}) for the value function $v$ of the controlled diffusion problem.

The Stochastic Perron Method (SPM) works under the severe assumption that a comparison result holds true. However, exactly this assumption ensures the uniqueness of a viscosity solution and the convergence of monotone, stable and consistent approximation schemes \cite{BarSou91}, \cite{Jak10}. 
The SPM can be regarded as a verification method, since it allows to prove that a viscosity solution of the HJB equation is the value function. The required comparison result is much weaker than the smoothness assumption in the usual verification scheme.

The creators of the SPM applied it to linear parabolic equations \cite{BaySir12a}, stochastic differential games \cite{BaySir12c}, \cite{Sir13} and finite horizon stochastic control problems, where the state process does not hit the boundary \cite{BaySir12b}. However, it is conjectured in \cite{Sir13} that ''any stochastic optimization problem could be treated using Stochastic Perron's Method, provided that it is properly formulated, and the stochastic semi-solutions defined accordingly''. 

In the present paper we are interested in the application of these ideas to stochastic exit time control problems. For such problems in general it is difficult to prove the DPP principle without additional assumptions. We only mention classical results \cite[Chapter III, Theorem 1.1]{Bor89}, \cite[Chapter V, Theorem 2.1]{FleSon06} and the paper \cite{DonKry07}. Due to the importance of stochastic exit time control problems for applications it is desirable to take a view from another perspective. 

In Section \ref{sec:2} we introduce the stochastic optimal control problem, recall the definitions of viscosity sub- and supersolutions and formulate the main result (Theorem \ref{th:1}), identifying the value function with a viscosity solution of the Dirichlet boundary value problem for the HJB equation. In Sections \ref{sec:3}, \ref{sec:4} we introduce the notions of stochastic sub- and supersolutions and obtain the estimates (\ref{eq:1.2}), closely following the ideas of \cite{BaySir12b}. Finally, in Section 5 we finish the proof of Theorem 1 and give a short proof of the DPP under the adopted assumptions.

\section{The exit time control problem and its value function}
\label{sec:2}
\setcounter{equation}{0}
Consider an $n$-dimensional controlled diffusion process $X$, governed by the system of stochastic differential equations  
\begin{equation} \label{eq:2.1}
dX_s=b(X_s,\alpha_s)ds+\sigma(X_s,\alpha_s)dW_s,\ \ X_0=x,
\end{equation}
where $W$ is a standard $m$-dimensional Brownian motion with respect to some filtration $\mathbb F=(\mathscr F_s)_{s\ge 0}$, satisfying the usual conditions. The control process $\alpha$ takes values in a compact subset $A$ of $\mathbb R^k$ and is assumed to be progressively measurable with respect to $\mathbb F$. Denote by $\mathcal A$ the set of such controls. We assume that the stochastic control problem is in the standard form (see \cite[Chapter 3]{Tou13}), that is, the drift vector $b$ and the diffusion matrix $\sigma$ are continuous on $\mathbb R^d\times A$ and satisfy Lipschitz and linear growth conditions:
\begin{align*}
|b(x,a)-b(y,a)|+|\sigma(x,a)-\sigma(y,a)| & \le K |x-y|, \\
|b(x,a)|+|\sigma(x,a)| & \le K (1+|x|)
\end{align*}
with some constant $K$ independent of $x$, $y$, $a$. These conditions ensure the existence of a unique strong solution of (\ref{eq:2.1}) on: see $[0,\infty)$ \cite[Chapter 2, Sect. 5]{Kry77}, 
\cite[Theorem 2.3]{Tou13}. This solution will be denoted by $X^{x,\alpha}$.

Let $G$ be a connected open set in $\mathbb R^d$. Consider a Borel set $\widehat G$ between $G$ and its closure: $G\subseteq\widehat G\subseteq\overline G$, and denote by  
$$\sigma^{x,\alpha}=\inf\{s\ge 0:X_s^{x,\alpha}\not\in\widehat G\}$$
the exit time of $X^{x,\alpha}$ from $\widehat G$. The value function $v$ of the corresponding stochastic control problem is defined as follows
\begin{equation} \label{eq:2.2}
v(x)=\sup_{\alpha\in\mathcal A} J(x,\alpha)=\sup_{\alpha\in\mathcal A}\mathsf E\left( \int_0^{\sigma^{x,\alpha}} e^{-\beta s} f(X_s^{x,\alpha},\alpha_s)\,ds+e^{-\beta \sigma^{x,\alpha}}g\left(X_{\sigma^{x,\alpha}}^{x,\alpha}\right)\right).
\end{equation}
Here $\beta>0$ and the functions 
$$f:\overline G\times A\mapsto\mathbb R,\ \ g:\partial G\mapsto\mathbb R$$
are continuous and bounded. Clearly, the value function $v$ is also bounded on $\overline G$.

From the theory of stochastic optimal control (see \cite{Lio83a}, \cite{Lio83b}, \cite{FleSon06}) it is known  that $v$ should satisfy in the ''viscosity sense'' the HJB equation
\begin{equation} \label{eq:2.3}
\beta v(x) - H(x,v_x(x),v_{xx}(x))=0,\ \  x\in G
\end{equation}
with the (continuous) Hamiltonian
$$ H(x,p,M)=\sup_{a\in A} \left[f(x,a)+b(x,a)\cdot p + \frac{1}{2}\Tr(\sigma(x,a)\sigma^T(x,a)M))\right],$$
$p\in\mathbb R^d$, $M\in\mathbb R^d\times\mathbb R^d$, together with the Dirichlet boundary condition 
\begin{equation} \label{eq:2.4}
v(x)=g(x),\ \ x\in\partial G. 
\end{equation}
We use the notation $v_x=(v_{x_i})_{i=1}^n$, $v_{xx}=(v_{x_i x_j})_{i,j=1}^n$ for the gradient vector and the Hessian matrix. 

Let us recall the corresponding definitions \cite{CraIshLio92}, \cite{BarBur95}, \cite{Jak10}. 
\begin{definition} \label{df:1}
A bounded upper semicontinuous (usc) function $u$ is called a \emph{viscosity subsolution} of (\ref{eq:2.3}), (\ref{eq:2.4}) if for any $\varphi\in C^2(\overline G)$ and for any local maximum point $x_0$ of $u-\varphi$ on $\overline G$, we have
\begin{align*}
\beta u(x_0) - H(x_0,\varphi_x(x_0),\varphi_{xx}(x_0)) & \le 0,\ \ x_0\in G,\\
\min\{\beta u(x_0) - H(x_0,\varphi_x(x_0),\varphi_{xx}(x_0)),u(x_0)-g(x_0)\} & \le 0,\ \ x_0\in\partial G.
\end{align*}

A bounded lower semicontinuous (lsc) function $w$ is called a \emph{viscosity supersolution} of (\ref{eq:2.3}), (\ref{eq:2.4}) if for any $\varphi\in C^2(\overline G)$ and for any local minimum point $x_0$ of $w-\varphi$ on $\overline G$, we have
\begin{align*}
\beta w(x_0) - H(x_0,\varphi_x(x_0),\varphi_{xx}(x_0)) & \ge 0,\ \ x_0\in G,\\
\max\{\beta w(x_0) - H(x_0,\varphi_x(x_0),\varphi_{xx}(x_0)),w(x_0)-g(x_0)\} & \ge 0,\ \ x_0\in\partial G.
\end{align*}

A bounded function $v$ is called a \emph{viscosity solution} of (\ref{eq:2.3}), (\ref{eq:2.4}) if its usc and lsc envelopes:
$$ v^*(x)=\inf_{\varepsilon>0}\sup\{ v(y): y\in B_\varepsilon(x)\cap\overline G\},\ \ 
   v_*(x)=\sup_{\varepsilon>0}\inf\{ v(y): y\in B_\varepsilon(x)\cap\overline G\}$$
are respectively viscosity sub- and supersolutions. Here $B_\varepsilon(x)$ is the open ball in $\mathbb R^n$ centered at $x$ with radius $\varepsilon$.
\end{definition}

In these definitions one can assume that the maximum (resp., minimum) point $x_0$ is strict and $\varphi(x_0)=u(x_0)$ (resp., $\varphi(x_0)=w(x_0)$). 

\begin{definition} \label{df:2} Following \cite{BarBur95}, \cite{BarRou98}, we say that the Strong Comparison Result (SCR) for (\ref{eq:2.3}), (\ref{eq:2.4}) holds true if $u\le w$ on $G$ for any subsolution $u$ and any supersolution $w$.
\end{definition}

Since $v^*\ge v_*$ this result implies that any viscosity solution $v$ is continuous on $G$. Moreover, any two viscosity solutions may differ only at $\partial G$. The validity of the SCR for stochastic exit time control problems in smooth bounded domains was studied thoroughly in \cite{BarBur95}, \cite{BarRou98}. The case of non-smooth boundary, including the case of finite horizon parabolic problems, was considered in \cite{Cha04}. Roughly speaking, it was proved that some non-degeneracy assumptions, concerning the boundary points, are enough. 

\begin{theorem} \label{th:1}
Assume that the SCR is satisfied. Then $v$ is continuous on $G$ and $v(x)=\tilde v(x)$, $x\in G$ for any viscosity solution $\tilde v$ of (\ref{eq:2.3}), (\ref{eq:2.4}).
\end{theorem}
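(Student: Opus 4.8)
The plan is to implement the Stochastic Perron construction of Bayraktar and S\^irbu, treating the random exit time $\sigma^{x,\alpha}$ in the role played by the terminal time in \cite{BaySir12b}. I would first introduce the family $\mathcal V^{+}$ of \emph{stochastic supersolutions} --- bounded usc functions $w$ on $\overline G$ with $w\ge g$ on $\partial G$ such that, for every $x$, every $\alpha\in\mathcal A$ and all stopping times $\tau_{1}\le\tau_{2}\le\sigma^{x,\alpha}$,
\begin{equation*}
\mathsf E\!\left[e^{-\beta\tau_{2}}w(X^{x,\alpha}_{\tau_{2}})+\int_{\tau_{1}}^{\tau_{2}}e^{-\beta s}f(X^{x,\alpha}_{s},\alpha_{s})\,ds\ \Big|\ \mathscr F_{\tau_{1}}\right]\le e^{-\beta\tau_{1}}w(X^{x,\alpha}_{\tau_{1}})
\end{equation*}
--- and the dual family $\mathcal V^{-}$ of \emph{stochastic subsolutions} (bounded lsc, $\le g$ on $\partial G$, and such that the corresponding process can be turned into a submartingale by an appropriate choice of control). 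The constant functions $w\equiv C:=\max\{\|g\|_{\infty},\|f\|_{\infty}/\beta\}$ and $u\equiv-C$ belong to $\mathcal V^{+}$ and $\mathcal V^{-}$, so both families are nonempty; an elementary pasting argument shows that $\mathcal V^{+}$ is directed downward and $\mathcal V^{-}$ upward, and moreover $w_{+}:=\inf_{w\in\mathcal V^{+}}w$ is usc while $u_{-}:=\sup_{u\in\mathcal V^{-}}u$ is lsc. Evaluating the supermartingale (resp.\ submartingale) property on $[0,\sigma^{x,\alpha}\wedge T]$ and letting $T\to\infty$ --- using that $X^{x,\alpha}_{\sigma^{x,\alpha}}\in\partial G$ on $\{\sigma^{x,\alpha}<\infty\}$, where $w\ge g\ge u$, while $\beta>0$ together with boundedness kill the contribution of $\{\sigma^{x,\alpha}=\infty\}$ --- gives $u\le J(\cdot,\alpha)\le w$ for every $\alpha$, and hence the bracketing
\begin{equation*}
u_{-}(x)\ \le\ v(x)\ \le\ w_{+}(x),\qquad x\in\overline G.
\end{equation*}

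The analytic heart of the proof is the claim that $w_{+}$ is a viscosity subsolution and $u_{-}$ a viscosity supersolution of (\ref{eq:2.3}), (\ref{eq:2.4}), which I would establish by contradiction as in \cite{BaySir12b}. Suppose, say, $u_{-}$ is not a supersolution: then there are a point $x_{0}$ and a function $\varphi\in C^{2}(\overline G)$ with $u_{-}-\varphi$ attaining a strict minimum $0$ at $x_{0}$, such that $\beta\varphi(x_{0})-H(x_{0},\varphi_{x}(x_{0}),\varphi_{xx}(x_{0}))<0$, and, if $x_{0}\in\partial G$, also $u_{-}(x_{0})<g(x_{0})$. By continuity of $H$ and of $b,\sigma,f$ this strict inequality survives on a small ball $B=B_{\delta}(x_{0})$, so for a small $\eta>0$ there is a measurable selection of near-maximizing controls along which $\varphi+\eta$ generates a submartingale of the above type up to the first exit of $X^{x,\alpha}$ from $B$; pasting $\varphi+\eta$ inside $B$ onto a member of $\mathcal V^{-}$ which (by a covering argument using that $\mathcal V^{-}$ is directed upward, and the strict minimum) dominates $\varphi+\eta$ near $\partial B$ yields an element of $\mathcal V^{-}$ with value at $x_{0}$ at least $\varphi(x_{0})+\eta=u_{-}(x_{0})+\eta$, contradicting the definition of $u_{-}$. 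The subsolution property of $w_{+}$ follows by the symmetric "push-down" construction. I expect this localization-and-pasting step to be where the real work lies in the exit time setting: one must intersect the auxiliary first exit from $B$ with $\sigma^{x,\alpha}$, switch controls (resp.\ functions) at that time, verify that the pasted function is still an admissible stochastic semi-solution --- lsc or usc, and correctly ordered against $g$ on $\partial G$ --- and, in the boundary case $x_{0}\in\partial G$, exploit the exit mechanism itself, namely that the first exit of $X^{x,\alpha}$ from $\widehat G$ lands in $\partial G$, where the bumped function stays below (resp.\ above) $g$; all of this has to be carried out without imposing any smoothness on $\partial G$ or on the Borel set $\widehat G$ with $G\subseteq\widehat G\subseteq\overline G$.

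It then remains to invoke the Strong Comparison Result. Since $w_{+}$ is a viscosity subsolution and $u_{-}$ a viscosity supersolution, Definition~\ref{df:2} gives $w_{+}\le u_{-}$ on $G$; combined with $u_{-}\le v\le w_{+}$ this forces $u_{-}=v=w_{+}$ on $G$, and as $u_{-}$ is lsc while $w_{+}$ is usc, $v$ is continuous on $G$ --- the first assertion of Theorem~\ref{th:1}. Finally, let $\tilde v$ be any viscosity solution of (\ref{eq:2.3}), (\ref{eq:2.4}), so that $\tilde v^{*}$ is a viscosity subsolution and $\tilde v_{*}$ a viscosity supersolution. Applying the SCR to the pair $(\tilde v^{*},u_{-})$ yields $\tilde v\le\tilde v^{*}\le u_{-}=v$ on $G$, and to the pair $(w_{+},\tilde v_{*})$ yields $v=w_{+}\le\tilde v_{*}\le\tilde v$ on $G$; hence $\tilde v=v$ on $G$, which is the second assertion.
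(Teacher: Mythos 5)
Your proposal is correct and follows essentially the same route as the paper: the Stochastic Perron construction of $\mathcal V^{\pm}$, the bracketing $u_{-}\le v\le w_{+}$, the contradiction-and-pasting arguments showing $u_{-}$ is a viscosity supersolution and $w_{+}$ a viscosity subsolution, and then two applications of the SCR (to $w_{+}\le u_{-}$ and to the pair of envelopes of $\tilde v$) to conclude $u_{-}=v=w_{+}=\tilde v$ on $G$ and continuity of $v$ there. The only cosmetic differences are that the paper takes its stochastic semi-solutions in $C_b(\overline G)$ rather than merely semicontinuous, uses a single constant control $a$ (no measurable selection) in the supersolution step, and evaluates directly at $\rho=\sigma^{x,\alpha}$ instead of truncating at $T$ and passing to the limit.
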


Note that, under the SCR, a change of $\widehat G$ does not affect $v$ except of boundary points. 
Our aim is to give a proof of Theorem \ref{th:1} by the means of the SPM, as it was done in \cite{BaySir12a}, \cite{BaySir12c}, \cite{Sir13}, \cite{BaySir12b} for other problems, mentioned in the introductory section. 

\section{Stochastic subsolutions}
\label{sec:3}
\setcounter{equation}{0}
Consider the stochastic differential equation, obtained from (\ref{eq:2.1}) by a randomization of the initial condition:
\begin{equation} \label{eq:3.1}
X_t=\xi I_{\{t\ge\tau\}}+\int_\tau^t b(X_s,\alpha_s)\,ds+\int_\tau^t\sigma(X_s,\alpha_s)\,dW_s,\ \ t\ge 0.
\end{equation}
Here $\tau:\Omega\mapsto [0,\infty]$ is a stopping time with respect to $\mathbb F$ and $\xi$ is an $\mathscr F_\tau$-measurable random vector such that $\xi I_{\{\tau<\infty\}}$ is bounded and $\xi\in\overline G$ on $\{\tau<\infty\}$. Such a pair $(\tau,\xi)$ will be called a \emph{randomized initial condition}. By $\int_\tau^t(\cdot)$ we always mean $\int_0^t I_{\{s\ge\tau\}}(\cdot)$. 

By the standard results (see \cite[Chapter 2, Sect.\,5]{Kry77}) there exists a pathwise unique strong solution $X^{\tau,\xi,\alpha}$ of (\ref{eq:3.1}). Note, that the values of $\xi$ on the set $\{\tau=\infty\}$ do not affect $X^{\tau,\xi,\alpha}$. The trajectories of the process $X^{\tau,\xi,\alpha}$ are continuous on the stochastic interval 
$$\Lbrack\tau,\infty\Rbrack=\{(\omega,t)\in\Omega\times [0,\infty): \tau(\omega)\le t\}.$$
In addition, $X^{\tau,\xi,\alpha}=0$ on $\Lbrack 0,\tau\Lbrack=\{(\omega,t)\in\Omega\times [0,\infty): t<\tau(\omega)\}$ and 
$$X_\tau^{\tau,\xi,\alpha}=\lim_{t\searrow\tau} X_t^{\tau,\xi,\alpha}=\xi\ \ \textrm{on}\ \{\tau<\infty\}.$$
For concreteness, for each component $X^{i,\tau,\xi,\alpha}$ of $X^{\tau,\xi,\alpha}$ we put 
$$X^{i,\tau,\xi,\alpha}_\infty=\liminf_{s\to\infty} X^{i,\tau,\xi,\alpha}_s.$$ 
The exit time of $X^{\tau,\xi,\alpha}$ is defined by
$$\sigma^{\tau,\xi,\alpha}=\inf\{t\ge\tau:X_t^{\tau,\xi,\alpha}\not\in \widehat G\}.
$$
To reconcile this notation with Section 2, we put $X^{x,\alpha}=X^{0,x,\alpha}$, $\sigma^{x,\alpha}=\sigma^{0,x,\alpha}$.

Let $u$ be a uniformly bounded continuous function: $u\in C_b(\overline G)$. The process
$$ Z_t^{\tau,\xi,\alpha}(u)=\int_\tau^t e^{-\beta s} f(X_s^{\tau,\xi,\alpha},\alpha_s)\,ds+I_{\{t\ge\tau\}}e^{-\beta t} u(X_t^{\tau,\xi,\alpha}) $$
plays a major role in the definition of stochastic semi-solutions. 

\begin{definition} \label{df:3}
Let us say that a control process $\alpha\in\mathcal A$ is $u$-\emph{suitable} for $(\tau,\xi)$ if	
\begin{equation} \label{eq:3.2}
\mathsf E(Z_\rho^{\tau,\xi,\alpha}(u)|\mathscr F_\tau)\ge Z_\tau^{\tau,\xi,\alpha}(u)
\end{equation} 
for any stopping time $\rho\in [\tau,\sigma^{\tau,\xi,\alpha}]$. A function $u\in C_b(\overline G)$ is called a \emph{stochastic subsolution} of (\ref{eq:2.3}), (\ref{eq:2.4}) if $u\le g$ on $\partial G$ and for any randomized initial condition $(\tau,\xi)$ there exists a $u$-suitable control $\alpha\in\mathcal A$.
\end{definition}

The set of stochastic subsolutions is denoted by $\mathcal V^{-}$.
Condition (3.2), which is weaker than the submartingale property, was introduced in \cite{BaySir12b}.

For any stochastic subsolution $u$ we have the inequality $u\le v$ on $\overline G$, where $v$ is defined by (\ref{eq:2.2}). Indeed, put in $\tau=0$, $\xi=x\in\overline G$, take a corresponding $u$-suitable control process $\alpha$, and put $\rho=\sigma^{x,\alpha}$. Then by the definitions, with the convention $Z^{x,\alpha}=Z^{0,x,\alpha}$,  we have
\begin{align*}
u(x)= Z_0^{x,\alpha}(u)\le \mathsf E Z_{\sigma^{x,\alpha}}^{x,\alpha}(u)\le J(x,\alpha)\le v(x).
\end{align*}
The last but one inequality follows from the fact that $X^{x,\alpha}_{\sigma^{x,\alpha}}\in\partial G$ on the set $\{\sigma^{x,\alpha}<\infty\}$ and $u\le g$ on $\partial G$.

The set $\mathcal V^{-}$ is non-empty, as it contains a constant
$$c\le\min\{\underline f/\beta,\underline g\},\ \ \underline f=\inf_{(x,a)\in\overline G\times A} f(x,a),\ \ \underline g=\inf_{x\in\partial G} g(x).$$
This assertion follows from the inequality
$$ \mathsf E(Z_\rho^{\tau,\xi,\alpha}(c)|\mathscr F_\tau)\ge\mathsf E\left[(\underline f/\beta-c)(e^{-\beta\tau}-e^{-\beta\rho})+ c e^{-\beta\tau}|\mathscr F_\tau\right]\ge c e^{-\beta\tau}=Z_\tau^{\tau,\xi,\alpha}(c),$$
showing that any $\alpha\in\mathcal A$ is $c$-suitable.

We will occasionally use the notation $\tau_A=\tau I_{A}+(+\infty) I_{A^c}$.

\begin{lemma} \label{lem:1}
Let $u_1, u_2$ be stochastic subsolutions. Then $u_1\vee u_2$ is a stochastic subsolution.
\end{lemma}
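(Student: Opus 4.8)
Set $u:=u_1\vee u_2$. It is bounded and continuous, and $u\le g$ on $\partial G$ because both $u_1,u_2\le g$ there, so by Definition \ref{df:3} the only thing to prove is that for every randomized initial condition $(\tau,\xi)$ there is a $u$-suitable control. The plan is to decide, according to which of $u_1,u_2$ dominates at the starting point $\xi$, which subsolution to ``use'', and then to follow a control suitable for it. Concretely, I would put
$$B_1=\bigl(\{u_1(\xi)\ge u_2(\xi)\}\cap\{\tau<\infty\}\bigr)\cup\{\tau=\infty\},\qquad B_2=\Omega\setminus B_1,$$
which belong to $\mathscr F_\tau$ since $\xi$ is $\mathscr F_\tau$-measurable and $u_1,u_2$ are continuous (the values of $\xi$ on $\{\tau=\infty\}$ are immaterial, so assigning that set to $B_1$ is harmless). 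Because $\tau_{B_i}\ge\tau$, each pair $(\tau_{B_i},\xi)$ is again a randomized initial condition, so Definition \ref{df:3} supplies a $u_1$-suitable control $\alpha^1$ for $(\tau_{B_1},\xi)$ and a $u_2$-suitable control $\alpha^2$ for $(\tau_{B_2},\xi)$. I then form the pasted control $\alpha:=\alpha^1 I_{B_1}+\alpha^2 I_{B_2}\in\mathcal A$. Since $B_i\in\mathscr F_\tau$ and only the restriction of the control to $\{t\ge\tau\}$ enters (\ref{eq:3.1}), pathwise uniqueness of (\ref{eq:3.1}) yields $X^{\tau,\xi,\alpha}=X^{\tau_{B_i},\xi,\alpha^i}$, hence also $\sigma^{\tau,\xi,\alpha}=\sigma^{\tau_{B_i},\xi,\alpha^i}$, on $B_i$ for $i=1,2$.

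To verify (\ref{eq:3.2}) for $\alpha$ I would fix a stopping time $\rho\in[\tau,\sigma^{\tau,\xi,\alpha}]$ and note that $\rho_{B_i}$ is then a stopping time with values in $[\tau_{B_i},\sigma^{\tau_{B_i},\xi,\alpha^i}]$. Integrating the suitability inequality of $\alpha^i$ against $I_{A\cap B_i}$, where $A\in\mathscr F_\tau$ is arbitrary (legitimate because $A\cap B_i\subseteq B_i$ forces $A\cap B_i\in\mathscr F_{\tau_{B_i}}$), gives
$$\mathsf E\bigl(Z_{\rho_{B_i}}^{\tau_{B_i},\xi,\alpha^i}(u_i)\,I_{A\cap B_i}\bigr)\ge\mathsf E\bigl(Z_{\tau_{B_i}}^{\tau_{B_i},\xi,\alpha^i}(u_i)\,I_{A\cap B_i}\bigr).$$
On $B_i$ one has $X^{\tau,\xi,\alpha}=X^{\tau_{B_i},\xi,\alpha^i}$, $\tau_{B_i}=\tau$, $\rho_{B_i}=\rho$; combining this with the pointwise bound $u_i\le u$ (so $Z_\rho^{\tau,\xi,\alpha}(u_i)\le Z_\rho^{\tau,\xi,\alpha}(u)$, since $e^{-\beta t}\ge0$) and the identity $u_i(\xi)=u(\xi)$ on $B_i\cap\{\tau<\infty\}$, while every $Z$ vanishes on $\{\tau=\infty\}$, the previous display upgrades to
$$\mathsf E\bigl(Z_\rho^{\tau,\xi,\alpha}(u)\,I_{A\cap B_i}\bigr)\ge\mathsf E\bigl(Z_\tau^{\tau,\xi,\alpha}(u)\,I_{A\cap B_i}\bigr).$$
Adding the inequalities for $i=1,2$ and using that $B_1,B_2$ partition $\Omega$ produces $\mathsf E(Z_\rho^{\tau,\xi,\alpha}(u)\,I_A)\ge\mathsf E(Z_\tau^{\tau,\xi,\alpha}(u)\,I_A)$ for every $A\in\mathscr F_\tau$, i.e.\ (\ref{eq:3.2}) for $u$ and $\alpha$. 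Boundedness of $f$, $g$, $u_1$, $u_2$ keeps all the expectations finite, so there is no integrability issue.

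The probabilistic substance of the lemma is thus entirely contained in the two displayed inequalities and Definition \ref{df:3}. The step requiring genuine care — and the one I expect to be the main obstacle — is the measurability bookkeeping around the pasting: checking that $\alpha$ is progressively measurable, that the strong solution $X^{\tau,\xi,\alpha}$ coincides with $X^{\tau_{B_i},\xi,\alpha^i}$ on $B_i$ (a locality property of (\ref{eq:3.1}) obtained from pathwise uniqueness), and that $\rho_{B_i}$ is genuinely a stopping time lying in $[\tau_{B_i},\sigma^{\tau_{B_i},\xi,\alpha^i}]$.
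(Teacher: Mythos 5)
Your proof is correct and follows essentially the same route as the paper: split $\Omega$ according to which of $u_1(\xi)$, $u_2(\xi)$ is larger, paste the corresponding suitable controls, identify the state processes via pathwise uniqueness, and use $u_i\le u$ together with $u_i(\xi)=u(\xi)$ on the respective event. The only (cosmetic) difference is that you invoke suitability for the localized initial conditions $(\tau_{B_i},\xi)$ and test (\ref{eq:3.2}) against sets $A\in\mathscr F_\tau$, whereas the paper takes controls suitable for $(\tau,\xi)$ itself and multiplies by $I_A$, $I_{A^c}$ inside the conditional expectation.
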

\begin{proof}
Let $\alpha^i\in\mathcal A$, $i=1,2$ be $u_i$-suitable controls for a randomized initial condition $(\tau,\xi)$. Put $A=\{u_1(\xi)>u_2(\xi)\}\in\mathscr F_\tau$ and define $\alpha\in\mathcal A$ by
$$ \alpha=I_A I_{\{\tau\le t\}}\alpha^1+I_{A^c} I_{\{\tau\le t\}}\alpha^2.$$
From the pathwise uniqueness property it easily follows that 
$$X^{\tau,\xi,\alpha}=X^{\tau,\xi,\alpha^1}\ \textnormal{on}\ \Lbrack\tau_A,\rho_A\Rbrack, \ \ \
  X^{\tau,\xi,\alpha}=X^{\tau,\xi,\alpha^2}\ \textnormal{on}\ \Lbrack\tau_{A^c},\rho_{A^c}\Rbrack. $$
Thus,
\begin{align*}
\mathsf E(Z_\rho^{\tau,\xi,\alpha}(u)|\mathscr F_\tau) & = \mathsf E(I_A Z_\rho^{\tau,\xi,\alpha^1}(u)+I_{A^c} Z_\rho^{\tau,\xi,\alpha^2}(u)|\mathscr F_\tau)\\
&\ge I_A \mathsf E(Z_\rho^{\tau,\xi,\alpha^1}(u_1)|\mathscr F_\tau)+I_{A^c} \mathsf E(Z_\rho^{\tau,\xi,\alpha^2}(u_2)|\mathscr F_\tau)\\
&\ge I_A Z_\tau^{\tau,\xi,\alpha^1}(u_1) + I_{A^c} Z_\tau^{\tau,\xi,\alpha^2}(u_2)\\
&=e^{-\beta\tau} u_1(\xi) I_A+e^{-\beta\tau} u_2(\xi) I_{A^c}=e^{-\beta\tau} u(\xi)=Z_\tau^{\tau,\xi,\alpha}(u)
\end{align*}
and $\alpha$ is $u$-suitable for $(\tau,\xi)$.
\end{proof}

In the proof of the next lemma we apply a result on approximation of a lower semicontinuous function by an increasing sequence of continuous ones: Theorem 1.3.7 of \cite{Bee93}. In \cite{BaySir12b} Proposition 4.1 of \cite{BaySir12a} is used instead.
\begin{lemma} \label{lem:2}
There exists a sequence $u_n\in\mathcal V^-$, $u_n(x)\le u_{n+1}(x)$, $x\in\overline G$ such that
$$\lim_{n\to\infty} u_n(x)=u_-(x):=\sup\limits_{u\in\mathcal V^-} u(x).$$
\end{lemma}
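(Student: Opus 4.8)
The plan is to combine the upward-directedness of $\mathcal V^-$ established in Lemma \ref{lem:1} with a countable-selection argument, using the fact that $u_-$ is a sup of continuous functions and hence lower semicontinuous, so that Theorem 1.3.7 of \cite{Bee93} applies. First I would record that $u_-$ is bounded (since every $u\in\mathcal V^-$ is bounded below by the admissible constant and above by $v$, which is bounded) and lower semicontinuous on $\overline G$, being a pointwise supremum of the continuous functions in $\mathcal V^-$. By the cited approximation theorem there is an increasing sequence of continuous functions $\varphi_k:\overline G\to\mathbb R$ with $\varphi_k\uparrow u_-$ pointwise on $\overline G$; these $\varphi_k$ need not themselves be stochastic subsolutions, so they serve only as targets to be dominated.

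Next I would, for each $k$, approximate $\varphi_k$ from below by members of $\mathcal V^-$. Since $\varphi_k < u_- = \sup_{u\in\mathcal V^-} u$ wherever $\varphi_k$ is strictly below $u_-$, for each point $x$ there is some $u\in\mathcal V^-$ with $u(x)$ close to $u_-(x)$, but a single countable family must work simultaneously for all $x$. Here I would invoke separability: $\overline G\subseteq\mathbb R^d$ is separable, and more importantly the continuous functions $\varphi_k$ and the elements of $\mathcal V^-$ are continuous, so it suffices to arrange domination on a countable dense set and then pass to a neighbourhood by continuity. Concretely, fix a countable dense set $\{x_j\}\subseteq\overline G$; for each pair $(j,k,m)$ pick $u_{j,k,m}\in\mathcal V^-$ with $u_{j,k,m}(x_j) > \varphi_k(x_j) - 1/m$. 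The countable collection $\{u_{j,k,m}\}$, closed up under finite pairwise maxima via Lemma \ref{lem:1}, yields an increasing sequence $\tilde u_n\in\mathcal V^-$ whose pointwise supremum $U$ satisfies $U(x_j)\ge\varphi_k(x_j)$ for all $j,k$, hence $U\ge\sup_k\varphi_k = u_-$ on $\{x_j\}$; since $U\le u_-$ always and both sides, restricted appropriately, interact with continuity, one upgrades this to $U = u_-$ on all of $\overline G$. Then set $u_n := \tilde u_n$.

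The step I expect to be the main obstacle is precisely this passage from a dense set to the whole of $\overline G$: the increasing limit $U$ of continuous functions is only lower semicontinuous, not continuous, so equality $U = u_-$ on a dense set does not immediately give it everywhere. The clean way around this is to avoid pointwise density arguments on $U$ altogether and instead lean directly on the structure already present: because each $\varphi_k$ is continuous and each $u\in\mathcal V^-$ is continuous, one can cover $\overline G$ by countably many balls on which a finite max of suitably chosen subsolutions exceeds $\varphi_k - 1/m$ uniformly (using compactness of closed balls and continuity), producing a countable family indexed by $(k,m,\text{ball})$; taking running maxima over an enumeration of this family and using Lemma \ref{lem:1} at each finite stage gives the desired increasing sequence $u_n\in\mathcal V^-$ with $\lim_n u_n \ge \varphi_k - 1/m$ everywhere for all $k,m$, hence $\lim_n u_n \ge \sup_k \varphi_k = u_-$, while $\lim_n u_n\le u_-$ is automatic; equality follows. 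The remaining details — measurability of the selections, that finite maxima stay in $\mathcal V^-$ (Lemma \ref{lem:1}), and monotonicity of the enumeration — are routine.
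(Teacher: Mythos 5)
Your final argument is correct and is essentially the paper's proof: approximate the lower semicontinuous $u_-$ from below by continuous $\varphi_k$ via Beer's Theorem 1.3.7, then for each level use continuity of the subsolutions to get relatively open sets where a single $u\in\mathcal V^-$ beats $\varphi_k-1/m$, extract finite subcovers on compact pieces of $\overline G$ (the paper uses a compact exhaustion $K_n$ where you use countably many closed balls, an immaterial difference), and take running maxima through Lemma \ref{lem:1}. The dense-set detour in your middle paragraph is indeed the gap you yourself flag, and the compactness-based fix you settle on is exactly what the paper does.
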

\begin{proof} Since $u_-$ is a lower semicontinuous function, there exists an  increasing sequence $g_n\in C(\overline G)$ such that $g_n(x)\nearrow u_-(x)$: see \cite[Theorem 1.3.7]{Bee93}. For any $x\in\overline G$ take $u_{n,x}\in\mathcal V^-$:
$$ g_n(x)<u_{n,x}(x)+1/n.$$
The sets
$ A_{x,n}=\{y\in\overline G: g_n(y)< u_{n,x}(y)+1/n\}$
are open in the relative topology of $\overline G$. For any compact subset $K_n$ of $\overline G$ there exists a finite subcover
$$ \bigcup_{i=1}^{N_n} A_{x_i,n}\supset K_n,\ \ x_i\in K_n.$$

Take a sequence of compact sets $K_n\subset K_{n+1}$ such that $\cup_{i=n}^\infty K_n =\overline G$. For any $u_0\in\mathcal V^-$ define $u_n\in\mathcal V^-$ by the recurrence relation
$$ u_n=\bigvee_{i=1}^{N_n} u_{x_i,n}\bigvee u_{n-1}. $$
We have $g_n<u_n+1/n$ on $K_n$. For $x\in\overline G$ take $N(x)$ such that $x\in K_n$, $1/n<\varepsilon/2$ and $u_-(x)-g_n(x)<\varepsilon/2$ for $n\ge N(x)$. Then
\[u_-(x)-u_n(x)<u_-(x)-g_n(x)+1/n<\varepsilon,\ \ n\ge N(x). \qedhere \]
\end{proof}

The following assertion is the most important part of the SPM.
\begin{theorem} \label{th:2}
The function
$$ u_-(x)=\sup\limits_{u\in\mathcal V^-} u(x)$$
is a viscosity supersolution of (\ref{eq:2.3}), (\ref{eq:2.4}).
\end{theorem}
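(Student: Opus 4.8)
The plan is to establish the supersolution property by contradiction, following the standard stochastic Perron blueprint. By Lemma \ref{lem:2} there is an increasing sequence $u_n\in\mathcal V^-$ with $u_n\nearrow u_-$; since each $u_n$ is continuous and $u_-$ is lsc, and since $u_-\le v$ is bounded, $u_-$ is a bona fide lsc bounded candidate supersolution. We must check the two inequalities in Definition \ref{df:1} for the supersolution: the interior HJB inequality at interior minimum points, and the boundary inequality at boundary minimum points. In both cases we argue by contradiction: we suppose a test function $\varphi\in C^2(\overline G)$ touches $u_-$ from below at a strict local minimum point $x_0$ with $\varphi(x_0)=u_-(x_0)$, and that the relevant supersolution inequality \emph{fails} strictly at $x_0$.

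First I would treat the interior case $x_0\in G$. Assuming $\beta\varphi(x_0)-H(x_0,\varphi_x(x_0),\varphi_{xx}(x_0))<0$, by continuity of $H$, $\varphi$ and its derivatives there is a small ball $\overline B_r(x_0)\subset G$ on which $\beta\varphi(x)-H(x,\varphi_x(x),\varphi_{xx}(x))<-\delta<0$ for some $\delta>0$. Crucially, $H$ is a supremum over the compact control set $A$, so by a measurable-selection argument there is for each $x$ a control value $a(x)\in A$ (which may be taken Borel in $x$, or one may instead use a finite $\varepsilon$-net of $A$ and a partition of $\overline B_r(x_0)$) such that, evaluating the generator $\mathcal L^a\varphi(x)=b(x,a)\cdot\varphi_x(x)+\frac12\Tr(\sigma\sigma^T(x,a)\varphi_{xx}(x))$ at $a=a(x)$, we get $\beta\varphi(x)-f(x,a(x))-\mathcal L^{a(x)}\varphi(x)<-\delta$ for $x\in\overline B_r(x_0)$. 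This is exactly the infinitesimal statement that the function $\varphi$, driven by the constant-in-feedback control $a(\cdot)$, makes $Z^{\tau,\xi,\alpha}(\varphi)$ a local submartingale up to the exit from $\overline B_r(x_0)$ --- which one verifies by Itô's formula applied to $s\mapsto e^{-\beta s}\varphi(X_s)$.

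The combinatorial heart of the argument is then to build a \emph{new} stochastic subsolution $\tilde u$ that beats $u_-$ at $x_0$, contradicting the definition of $u_-$. Since $x_0$ is a \emph{strict} minimum, we may choose $\eta>0$ small enough that $\varphi+\eta\le u_-$ on the sphere $\partial B_r(x_0)$ while $(\varphi+\eta)(x_0)=u_-(x_0)+\eta>u_-(x_0)$; shrinking $\eta$ further ensures the submartingale inequality $\beta(\varphi+\eta)-f-\mathcal L^{a(\cdot)}(\varphi+\eta)<-\delta/2<0$ still holds on $\overline B_r(x_0)$. For $n$ large, $u_n>\varphi+\eta$ does \emph{not} yet hold near $x_0$, but $u_n$ eventually exceeds $\varphi+\eta$ on the compact sphere $\partial B_r(x_0)$ by Dini's theorem (an increasing sequence of continuous functions converging to a lsc limit converges uniformly on compacta --- here one uses that $u_-$ is actually continuous on $G$ where it will equal $v$, or more carefully one works with the lsc envelope and a slightly smaller radius). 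Define
\[
\tilde u=\begin{cases}(\varphi+\eta)\vee u_n & \text{on }\overline B_r(x_0),\\ u_n & \text{on }\overline G\setminus B_r(x_0).\end{cases}
\]
Because $\varphi+\eta\le u_n$ near $\partial B_r(x_0)$, $\tilde u$ is continuous; clearly $\tilde u\in C_b(\overline G)$ and $\tilde u\le g$ on $\partial G$ (as $\tilde u=u_n$ there). The remaining task is to show $\tilde u$ is a stochastic subsolution, i.e.\ to produce a $\tilde u$-suitable control for an arbitrary randomized initial condition $(\tau,\xi)$. This is the expected main obstacle: one must \emph{splice} a $u_n$-suitable control $\alpha^n$ with the feedback control $a(\cdot)$ that works inside $B_r(x_0)$, switching between them at the hitting/exit times of $B_r(x_0)$, and verify the submartingale-type inequality \eqref{eq:3.2} for the glued process by a stopping-time/optional-sampling argument --- exactly the kind of pathwise-uniqueness gluing used in Lemma \ref{lem:1}, but now iterated over successive excursions in and out of $B_r(x_0)$ and combined with the Itô estimate above. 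Since $\tilde u(x_0)=(\varphi+\eta)(x_0)=u_-(x_0)+\eta>u_-(x_0)$, this contradicts $u_-=\sup_{u\in\mathcal V^-}u$, proving the interior inequality.

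Finally, the boundary case $x_0\in\partial G$: here one must show $\max\{\beta u_-(x_0)-H(x_0,\varphi_x,\varphi_{xx}),\,u_-(x_0)-g(x_0)\}\ge 0$, so assume \emph{both} terms are strictly negative, in particular $u_-(x_0)<g(x_0)$ and the HJB expression is negative at $x_0$. By continuity one again gets a ball $\overline B_r(x_0)$ (now intersecting $\partial G$) on which the HJB inequality holds with margin $-\delta$ \emph{and} on which $\varphi+\eta<g$; the same construction of $\tilde u$ goes through, with the boundary condition $\tilde u\le g$ preserved precisely because $\varphi+\eta<g$ near $x_0$ and $\tilde u=u_n\le g$ elsewhere on $\partial G$. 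The suitability verification is the same splicing argument, noting that if $X$ exits $\widehat G$ before leaving $B_r(x_0)$ the control choice is irrelevant past the exit time. This again yields $\tilde u(x_0)>u_-(x_0)$, a contradiction, completing the proof that $u_-$ is a viscosity supersolution.
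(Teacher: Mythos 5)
Your blueprint matches the paper's (contradiction, lift the test function by $\eta$, glue with a pointwise maximum, switch controls at the hitting time of an inner sphere), but two of your steps would not go through as described, and a third is an avoidable complication.

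The main gap is the feedback selection $a(x)$. It is both unnecessary and unworkable here. The contradiction hypothesis is an inequality at the single point $x_0$, and since $H(x_0,p,M)$ is a supremum over the compact set $A$ of functions continuous in $a$, it already yields one \emph{fixed} $a\in A$ with $\beta\varphi(x_0)-f(x_0,a)-(\mathcal L^a\varphi)(x_0)<0$; continuity in $x$ for this fixed $a$ then gives the strict inequality on a whole ball with the \emph{constant} control $a$ (this is (\ref{eq:3.5}) in the paper). Your Borel feedback $a(\cdot)$ instead produces an SDE with merely measurable coefficients $b(x,a(x))$, $\sigma(x,a(x))$, for which the standing Lipschitz assumptions give no strong solution; moreover the process $\alpha_s=a(X_s)$ cannot be exhibited as an element of $\mathcal A$ before $X$ has been constructed, which is circular. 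The finite $\varepsilon$-net variant has the same defect. A second genuine error is the appeal to Dini's theorem: an increasing sequence of continuous functions converging to a merely lsc limit need not converge uniformly on compacta (Dini requires a continuous limit), and invoking continuity of $u_-$ because ``it will equal $v$'' is circular at this stage. The correct replacement is the paper's elementary compactness argument on the annulus $S_\varepsilon=(\overline B_\varepsilon(x_0)\setminus B_{\varepsilon/2}(x_0))\cap\overline G$: lower semicontinuity gives $u_-\ge\varphi+\delta$ there, the sets $A_n=\{x\in S_\varepsilon: u_n(x)\le\varphi(x)+\delta'\}$ are compact, nested and have empty intersection, so some $A_N=\emptyset$, producing a single $u_N\in\mathcal V^-$ with $u_N>\varphi+\delta'$ on all of $S_\varepsilon$ (an annulus, not just a sphere, which is what you need for continuity of the glued function and for the switching argument).

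Finally, no iteration over successive excursions is needed, and attempting it would make the verification of (\ref{eq:3.2}) much harder. A single switch suffices: run the constant control $a$ (on the event that $\xi$ lands where $\varphi^\eta>u$, otherwise a $u$-suitable control) until the first time $\tau_0$ the state hits $\partial B_{\varepsilon/2}(x_0)$ or exits $\widehat G$, then from $\tau_1$ onward use a $u$-suitable control for the new randomized initial condition $(\tau_1,\xi_1)$. Since $u>\varphi^\eta$ on the inner sphere, $u^\eta(\xi_1)=u(\xi_1)$ at the switch, so the suitability inequality propagates back through $\tau_1$ by conditioning, while It\^o's formula plus (\ref{eq:3.5}) handles the initial stretch; any later re-entry of the process into $B_{\varepsilon/2}(x_0)$ is irrelevant because only $u^\eta\ge u$ is used past $\tau_1$.
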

\begin{proof}
Let $x_0\in\partial G$. If $u_-$ is not a viscosity supersolution then there exist $\varphi\in C^2(\overline G)$ and $\varepsilon>0$ such that $u_-(x_0)=\varphi(x_0)$, $u_->\varphi$ on the set $(\overline{B_\varepsilon (x_0)}\backslash\{0\})\cap\overline G$  and
$$ \max\{\beta \varphi(x_0) - H(x_0,\varphi_x(x_0),\varphi_{xx}(x_0)),\varphi(x_0)-g(x_0)\} < 0. $$
With the notation 
\begin{equation} \label{eq:3.3}
(\mathcal L^a\varphi)(x)=b(x,a)\varphi_x(x)+\frac{1}{2}\Tr\left(\sigma(x,a)\sigma^T(x,a)\varphi_{xx}(x)\right)
\end{equation}
we have $\varphi(x_0)<g(x_0)$ and
$$ \beta\varphi(x_0)-(\mathcal L^a\varphi)(x_0)-f(x_0,a)<0$$
for some $a\in A$. By the continuity of $b$, $\sigma$, $f$, $g$ one can take $\varepsilon>0$ such that
\begin{equation} \label{eq:3.4}
\varphi(x)<g(x)\ \ \textnormal{on}\ B_\varepsilon(x_0)\cap \partial G,
\end{equation}
\begin{equation} \label{eq:3.5}
\beta\varphi(x)-(\mathcal L^a\varphi)(x)-f(x,a)<0\ \ \textnormal{on}\ B_\varepsilon(x_0)\cap \overline G.
\end{equation}

Put $S_\varepsilon=\left(\overline B_\varepsilon(x_0)\backslash B_{\varepsilon/2}(x_0)\right)\cap\overline G$. By the lower semicontinuity of $u_-$ we have
$$ u_-(x)-\varphi(x)\ge\delta>0,\ \ x\in S_\varepsilon.$$
According to Lemma \ref{lem:2} there exists an increasing sequence $u_n\in\mathcal V^-$, $u_n\nearrow u_-$. The sets
$$ A_n=\{x\in S_\varepsilon:u_n(x)-\varphi(x)\le\delta'\},\ \ \delta'\in (0,\delta) $$
are compact, $A_n\supset A_{n+1}$ and 
$\cap_{n=1}^\infty A_n=\emptyset$ since $u_n\nearrow u_-$. Thus, $\cap_{n=1}^N A_n=\emptyset$ for some $N$. This means that there exists $u=u_N\in\mathcal V^-$ such that $u-\varphi>\delta'$ on $S_\varepsilon$. 

Now define the function $\varphi^\eta=\varphi+\eta$, where $\eta\in (0,\delta')$ is such that inequalities (\ref{eq:3.4}), (\ref{eq:3.5}) hold true for $\varphi^\eta$ instead of $\varphi$. Note that
$$ u-\varphi^\eta=u-\varphi-\eta>\delta'-\eta>0\ \ \textnormal{on}\ S_\varepsilon.$$
We claim that
$$ u^\eta=\left\{\begin{array}{l l}
    \varphi^\eta\vee u  & \text{on}\quad B_\varepsilon(x_0)\cap \overline G,\\
    u & \ \text{otherwise} 
  \end{array} \right. $$
is a stochastic subsolution. This is the last (but the main) step of the proof since it gives a contradiction with the definition of $u_-$: $u^\eta(x_0)=u_-(x_0)+\eta > u_-(x_0)$.

Clearly, $u^\eta\in C_b(\overline G)$ and $u^\eta\le g$ on $\partial G$. We need to construct a $u^\eta$-suitable control $\alpha$ for a randomized initial condition $(\tau,\xi)$. Put 
$$ U=B_{\varepsilon/2}(x_0)\cap\{x\in\overline G:\varphi^\eta(x)>u(x)\},\ \ \Gamma=\{\xi\in U\}\in\mathscr F_\tau$$
and define $\alpha^1\in\mathcal A$ by
$$ \alpha^1_t=(a I_\Gamma+\alpha^0_t I_{\Gamma^c})I_{\{t\ge\tau\}},$$
where $\alpha^0$ is a $u$-suitable control for $(\tau,\xi)$. Furthermore, put
\begin{align*}
\tau_0 & = \inf\{t\ge\tau:X^{\tau,\xi,\alpha^1}_t\in \partial B_{\varepsilon/2}(x_0)\}\wedge\sigma^{\tau,\xi,\alpha^1},\\ 
\tau_1 &= \tau_0 I_{\{\tau_0<\sigma^{\tau,\xi,\alpha^1}\}}+(+\infty) I_{\{\tau_0=\sigma^{\tau,\xi,\alpha^1}\}}.  
\end{align*}
On the event $\{\tau_0<\sigma^{\tau,\xi,\alpha^1}\}=\{\tau_1<\infty\}$ trajectories of $X^{\tau,\xi,\alpha^1}$ hit the boundary of $B_{\varepsilon/2}(x_0)$ before they exit the set $\widehat G$ and $\tau_0=\tau_1$. On the event $\{\tau_0=\sigma^{\tau,\xi,\alpha^1}\}$ the trajectories of $X^{\tau,\xi,\alpha}$ exit $\widehat G$ on the stochastic interval $\Lbrack\tau,\tau_0\Rbrack$ and $\tau_1=\infty$.

Define $\alpha\in\mathcal A$ by
$$ \alpha_t=\alpha^1_t I_{\{t\le\tau_0\}} + \alpha^2_t I_{\{t>\tau_1\}},$$
where $\alpha^2$ is a $u$-suitable control for $(\tau_1,\xi_1)$, $\xi_1=X^{\tau,\xi,\alpha^1}_{\tau_1}$. The process $X^{\tau_1,\xi_1,\alpha^2}$ is well-defined since $\xi_1 I_{\{\tau_1<\infty\}}\in \partial B_{\varepsilon/2}(x_0)\cap\overline G$ is bounded. We are going to show that $\alpha$ is a $u^\eta$-suitable control for $(\tau,\xi)$.

From the pathwise uniqueness property we get $X^{\tau,\xi,\alpha}=X^{\tau,\xi,\alpha^1}$ on $\Lbrack\tau,\tau_0\Rbrack$. Moreover, $\sigma^{\tau,\xi,\alpha}=\sigma^{\tau_1,\xi_1,\alpha^2}$ on $\{\tau_1<\infty\}$
and $X^{\tau,\xi,\alpha}=X^{\tau^1,\xi^1,\alpha^2}$ on $\Lbrack\tau_1,\sigma^{\tau_1,\xi_1,\alpha^2}\Rbrack$.  Take a stopping time $\rho\in[\tau,\sigma^{\tau,\xi,\alpha}]$. We have
\begin{align} \label{eq:3.6}
Z^{\tau,\xi,\alpha}_\rho(u^\eta) I_{\{\rho>\tau_1\}}&=I_{\{\rho>\tau_1\}}\int_\tau^{\tau_1} e^{-\beta s} f(X^{\tau,\xi,\alpha^1},\alpha^1_s)\,ds\nonumber\\
&+ I_{\{\rho>\tau_1\}}\left(\int_{\tau_1}^\rho e^{-\beta s} f(X^{\tau_1,\xi_1,\alpha^2},\alpha^2_s)\,ds+ e^{-\beta\rho} u^\eta(X_\rho^{\tau_1,\xi_1,\alpha^2})\right)\nonumber\\
&\ge I_{\{\rho>\tau_1\}}\int_\tau^{\tau_1} e^{-\beta s} f(X^{\tau,\xi,\alpha^1},\alpha^1_s)\,ds+I_{\{\rho>\tau_1\}} Z^{\tau_1,\xi_1,\alpha^2}_\rho(u).
\end{align}
Applying Definition \ref{df:3} to the stopping time
$$ \widehat\rho=\rho I_{\{\rho>\tau_1\}}+\sigma^{\tau_1,\xi_1,\alpha^2}I_{\{\rho\le\tau_1\}}\in [\tau_1,\sigma^{\tau_1,\xi_1,\alpha^2}],$$
we get
\begin{align} \label{eq:3.7}
\mathsf E(Z^{\tau_1,\xi_1,\alpha^2}_\rho(u) I_{\{\rho>\tau_1\}}|\mathcal F_{\tau_1}) & =\mathsf E(Z^{\tau_1,\xi_1,\alpha^2}_{\widehat\rho}(u) I_{\{\rho>\tau_1\}}|\mathcal F_{\tau_1})\ge I_{\{\rho>\tau_1\}} Z^{\tau_1,\xi_1,\alpha^2}_{\tau_1}(u)\nonumber\\
 & = I_{\{\rho>\tau_1\}} e^{-\beta\tau_1}u(\xi_1)=I_{\{\rho>\tau_1\}} e^{-\beta\tau_1}u^\eta(\xi_1).
\end{align}
The last equality follows from the fact that $\xi_1\in\partial  B_{\varepsilon/2}(x_0)\cap\overline G$ on the set $\{\rho>\tau_1\}$ and $u>\varphi^\eta$ on $\partial  B_{\varepsilon/2}(x_0)\cap\overline G\subset S_\varepsilon$. From (\ref{eq:3.6}), (\ref{eq:3.7}) it follows that
\begin{align} \label{eq:3.8}
\mathsf E(Z^{\tau,\xi,\alpha}_\rho(u^\eta) I_{\{\rho>\tau_1\}}|\mathcal F_{\tau_1}) 
&\ge I_{\{\rho>\tau_1\}}\left(\int_\tau^{\tau_1} e^{-\beta s} f(X^{\tau,\xi,\alpha^1},\alpha^1_s)\,ds+e^{-\beta\tau_1}u^\eta(\xi_1)\right)\nonumber\\
&= I_{\{\rho>\tau_1\}} Z^{\tau,\xi,\alpha^1}_{\tau_1}(u^\eta).
\end{align}

If $\{\tau_0=\sigma^{\tau,\xi,\alpha^1}\}$ then $\rho\le\sigma^{\tau,\xi,\alpha^1}$. Hence, $\{\rho>\tau_0\}\subset\{\tau_0<\sigma^{\tau,\xi,\alpha^1}\}$ and 
\begin{equation} \label{eq:3.9}
\{\rho>\tau_0\}=\{\rho>\tau_1\}.
\end{equation}
Moreover, $\tau_0=\tau_1$ on the set (\ref{eq:3.9}). Therefore, by (\ref{eq:3.8}),
\begin{align} \label{eq:3.10}
\mathsf E(Z^{\tau,\xi,\alpha}_\rho(u^\eta)|\mathcal F_{\tau}) &=\mathsf E(I_{\{\rho\le\tau_1\}}Z^{\tau,\xi,\alpha}_\rho(u^\eta)|\mathcal F_{\tau})+\mathsf E(I_{\{\rho>\tau_1\}}\mathsf E(Z^{\tau,\xi,\alpha}_\rho(u^\eta)|\mathcal F_{\tau_1})|\mathcal F_\tau)\nonumber\\
& \ge \mathsf E(I_{\{\rho\le\tau_0\}}Z^{\tau,\xi,\alpha^1}_\rho(u^\eta)|\mathcal F_{\tau})+\mathsf E(I_{\{\rho>\tau_0\}} Z^{\tau,\xi,\alpha^1}_{\tau_0}(u^\eta)|\mathcal F_{\tau})\nonumber\\
& =\mathsf E(Z^{\tau,\xi,\alpha^1}_{\rho\wedge\tau_0}(u^\eta)|\mathcal F_\tau).
\end{align}

On the stochastic interval $\Lbrack\tau_\Gamma,(\rho\wedge\tau_0)_\Gamma\Rbrack$ the trajectories of $X^{\tau,\xi,\alpha^1}$ do not leave the set $B_{\varepsilon/2}(x_0)\cap \overline G$. Indeed, a trajectory $(X_s)_{\tau\le s\le\rho\wedge\tau_0}$, starting from $\Gamma\subset B_{\varepsilon/2}(x_0)$, lies in $\overline G$, and $\tau_0$ occurs before this trajectory exits $B_{\varepsilon/2}(x_0)$. Hence, the estimate $u^\eta(X^{\tau,\xi,\alpha^1}_{\rho\wedge\tau_0})\ge\varphi^\eta(X^{\tau,\xi,\alpha^1}_{\rho\wedge\tau_0})$ holds true on $\Gamma$ and we get the inequality
\begin{equation} \label{eq:3.11}
Z^{\tau,\xi,\alpha^1}_{\rho\wedge\tau_0}(u^\eta)=Z^{\tau,\xi,a}_{\rho\wedge\tau_0}(u^\eta) I_\Gamma+Z^{\tau,\xi,\alpha^0}_{\rho\wedge\tau_0}(u^\eta) I_{\Gamma^c}\ge Z^{\tau,\xi,a}_{\rho\wedge\tau_0}(\varphi^\eta) I_\Gamma+Z^{\tau,\xi,\alpha^0}_{\rho\wedge\tau_0}(u) I_{\Gamma^c}.
\end{equation}

Applying Ito's formula, we obtain
\begin{align*}
Z^{\tau,\xi,a}_{\rho\wedge\tau_0}(\varphi^\eta) &=\int_\tau^{\rho\wedge\tau_0} e^{-\beta s} f(X^{\tau,\xi,a},a)\,ds+e^{-\beta \rho\wedge\tau_0}\varphi^\eta(X^{\tau,\xi,a}_{\rho\wedge\tau_0})\\
&=e^{-\beta\tau}\varphi^\eta(\xi)+\int_\tau^{\rho\wedge\tau_0} e^{-\beta s}\left[f(X_s^{\tau,\xi,a},a)+(\mathcal L^a\varphi^\eta-\beta\varphi^\eta)(X^{\tau,\xi,a}_s)\right]\,ds\\
&+\int_\tau^{\rho\wedge\tau_0}e^{-\beta s}\varphi^\eta_x(X_s^{\tau,\xi,a})\cdot\sigma(X_s^{\tau,\xi,a},a)\,dW_s.
\end{align*}
In view of (\ref{eq:3.5}), taking the conditional expectation we get
\begin{equation} \label{eq:3.12}
\mathsf E(Z^{\tau,\xi,a}_{\rho\wedge\tau_0}(\varphi^\eta) I_\Gamma|\mathscr F_\tau)\ge e^{-\beta\tau}\varphi^\eta(\xi) I_\Gamma=e^{-\beta\tau} u^\eta(\xi) I_\Gamma=Z^{\tau,\xi,\alpha}_\tau(u^\eta) I_\Gamma.\end{equation}
Furthermore,
\begin{equation} \label{eq:3.13}
\mathsf E(Z^{\tau,\xi,\alpha^0}_{\rho\wedge\tau_0}(u) |\mathscr F_\tau)I_{\Gamma^c} \ge Z_\tau^{\tau,\xi,\alpha^0}(u) I_{\Gamma^c}=Z_\tau^{\tau,\xi,\alpha}(u^\eta) I_{\Gamma^c}
\end{equation}
by the definition of $\alpha^0$. By combining (\ref{eq:3.12}), (\ref{eq:3.13}) with (\ref{eq:3.11}) and (\ref{eq:3.10}) we get the desired inequality
$$ \mathsf E(Z^{\tau,\xi,\alpha}_\rho(u^\eta) |\mathscr F_\tau)\ge Z_\tau^{\tau,\xi,\alpha}(u^\eta),$$
which finishes the proof. 

For an interior point $x_0\in G$ the argumentation is almost the same, but slightly simpler and we omit it.  
\end{proof}

\section{Stochastic supersolutions}
\label{sec:4}
\setcounter{equation}{0}
Stochastic supersolutions are defined in the same manner as stochastic subsolutions. However, the notion of a suitable control is not needed here.
\begin{definition} \label{df:4}
With the notation of Section 3 we call $w\in C_b(\overline G)$ a \emph{stochastic subsolution} if $w\ge g$ on $\partial G$ and 
\begin{equation} \label{eq:4.1}
\mathsf E(Z^{\tau,\xi,\alpha}_\rho(w)|\mathscr F_\tau)\le Z^{\tau,\xi,\alpha}_\tau(w)
\end{equation}
for any randomized initial condition $(\tau,\xi)$, control process $\alpha\in\mathcal A$ and stopping time $\rho\in [\tau,\sigma^{\tau,\xi,\alpha}]$.
\end{definition}

The set $\mathcal V^+$ of stochastic supersolutions is non-empty: any sufficiently large constant 
$$ c\ge\max\{\overline f/\beta,\overline g\},\ \ \overline f=\sup_{(x,a)\in\overline G\times A} f(x,a),\ \ \overline g=\sup_{x\in\partial G} g(x)$$
belongs to $\mathcal V^+$ since
$$ \mathsf E(Z^{\tau,\xi,\alpha}_\rho(c)|\mathscr F_\tau)\le\mathsf E\left[(c-\overline f/\beta)(e^{-\beta\rho}-e^{-\beta\tau})+c e^{-\beta\tau}|\mathcal F_\tau\right]\le c e^{-\beta\tau}=Z^{\tau,\xi,\alpha}_\tau(c).$$

Any stochastic supersolution $w$ is an upper bound for $v$. Indeed, for $\tau=0$, $\xi=x$ we have
$$w(x)=Z_0^{x,\alpha}\ge \mathsf E(Z_{\sigma^{x,\alpha}}^{x,\alpha})\ge J(x,\alpha),$$
where the upper index $\tau=0$ is omitted by the previous conventions.

\begin{lemma} \label{lem:3}
Let $w_1$, $w_2$ be stochastic supersolutions. Then $w_1\wedge w_2$ is a stochastic supersolution.
\end{lemma}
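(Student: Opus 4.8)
The plan is to verify directly the three defining properties of a stochastic supersolution (Definition~\ref{df:4}) for $w:=w_1\wedge w_2$, without the control-switching construction used in Lemma~\ref{lem:1}. The point is that the inequality (\ref{eq:4.1}) must hold for \emph{every} admissible control, so there is no ``suitable'' control to be built from the randomized initial condition: the given $\alpha$ stays fixed throughout. This makes the argument considerably shorter than its subsolution counterpart.

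First I would dispose of the trivial requirements. The pointwise minimum of two bounded continuous functions is bounded and continuous, so $w\in C_b(\overline G)$; and $w\ge g$ on $\partial G$ because $w_1\ge g$ and $w_2\ge g$ there. It remains to establish (\ref{eq:4.1}).

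Next, fix a randomized initial condition $(\tau,\xi)$, a control $\alpha\in\mathcal A$ and a stopping time $\rho\in[\tau,\sigma^{\tau,\xi,\alpha}]$. The key elementary observation is the pathwise monotonicity of $Z_\rho^{\tau,\xi,\alpha}(\cdot)$ in its function argument: the running-cost integral does not depend on the function, $I_{\{\rho\ge\tau\}}=1$ since $\rho\ge\tau$, and $e^{-\beta\rho}\ge 0$ (with the convention $e^{-\beta\infty}=0$, under which the terminal term vanishes on $\{\rho=\infty\}$, where $X_\rho^{\tau,\xi,\alpha}$ need not lie in $\overline G$). Hence $w\le w_i$ on $\overline G$ yields $Z_\rho^{\tau,\xi,\alpha}(w)\le Z_\rho^{\tau,\xi,\alpha}(w_i)$ a.s., for $i=1,2$. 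Taking $\mathsf E(\cdot\,|\,\mathscr F_\tau)$ and applying (\ref{eq:4.1}) to $w_i$ gives
$$\mathsf E\big(Z_\rho^{\tau,\xi,\alpha}(w)\,\big|\,\mathscr F_\tau\big)\le \mathsf E\big(Z_\rho^{\tau,\xi,\alpha}(w_i)\,\big|\,\mathscr F_\tau\big)\le Z_\tau^{\tau,\xi,\alpha}(w_i)=e^{-\beta\tau}w_i(\xi),\qquad i=1,2.$$

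Finally I would minimise the right-hand sides over $i\in\{1,2\}$:
$$\mathsf E\big(Z_\rho^{\tau,\xi,\alpha}(w)\,\big|\,\mathscr F_\tau\big)\le e^{-\beta\tau}\big(w_1(\xi)\wedge w_2(\xi)\big)=e^{-\beta\tau}w(\xi)=Z_\tau^{\tau,\xi,\alpha}(w),$$
which is precisely (\ref{eq:4.1}) for $w$, so $w\in\mathcal V^+$. I do not expect any genuine obstacle here; the only step that needs a moment's care is the pathwise monotonicity of $Z$ together with the bookkeeping on the sets $\{\tau=\infty\}$ and $\{\rho=\infty\}$, and that is routine under the conventions already fixed in Section~\ref{sec:3}. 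In particular, unlike in Lemma~\ref{lem:1}, there is no need to introduce the set $\{w_1(\xi)<w_2(\xi)\}$ or to paste controls together.
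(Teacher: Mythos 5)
Your argument is correct and is essentially identical to the paper's one-line proof: pathwise monotonicity of $Z_\rho^{\tau,\xi,\alpha}(\cdot)$ in the function argument, conditional expectation, the supersolution inequality for each $w_i$, and then the minimum over $i\in\{1,2\}$. Your observation that no control-pasting is needed (in contrast to Lemma~\ref{lem:1}) matches the paper's remark preceding Definition~\ref{df:4}.
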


The proof follows from the inequality
\begin{align*}
\mathsf E(Z^{\tau,\xi,\alpha}_\rho(w_1\wedge w_2)|\mathcal F_\tau) &\le \min_{i=1,2}\mathsf E(Z^{\tau,\xi,\alpha}_\rho(w_i)|\mathcal F_\tau)\le\min_{i=1,2}Z^{\tau,\xi,\alpha}_\tau(w_i)\\
&= e^{-\beta\tau}(w_1\wedge w_2)(\xi)=Z^{\tau,\xi,\alpha}_\tau(w_1\wedge w_2).
\end{align*}
\begin{lemma} \label{lem:4}
There exists a sequence $w_n\in\mathcal V^+$, $w_n(x)\ge w_{n+1}(x)$, $x\in\overline G$ such that
$$\lim_{n\to\infty} w_n(x)=w_+(x):=\inf\limits_{w\in\mathcal V^+} w(x).$$
\end{lemma}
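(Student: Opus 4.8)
The plan is to run the argument of Lemma \ref{lem:2} in the reverse direction, exchanging the roles of suprema and infima, of lower and upper semicontinuity, and of Lemmas \ref{lem:1} and \ref{lem:3}. First I would observe that $w_+$, being the pointwise infimum of the family of continuous functions $\mathcal V^+$, is upper semicontinuous on $\overline G$; it is also bounded, since it is squeezed between a constant stochastic subsolution and a constant stochastic supersolution, both of which were exhibited in Sections \ref{sec:3} and \ref{sec:4}. Applying Theorem 1.3.7 of \cite{Bee93} to the bounded lower semicontinuous function $-w_+$ on the metric space $\overline G$ then yields an increasing sequence of continuous functions approximating $-w_+$ from below, that is, a decreasing sequence $h_n\in C(\overline G)$ with $h_n(x)\searrow w_+(x)$ for every $x\in\overline G$.

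Next, for each $x\in\overline G$ and each $n$ I would choose $w_{n,x}\in\mathcal V^+$ with $w_{n,x}(x) < h_n(x) + 1/n$; this is possible because $w_+(x)=\inf_{w\in\mathcal V^+} w(x)\le h_n(x)$. The sets
$$ B_{x,n}=\{y\in\overline G:\, w_{n,x}(y) < h_n(y) + 1/n\} $$
are open in the relative topology of $\overline G$ and contain $x$, so any compact $K_n\subseteq\overline G$ admits a finite subcover $\bigcup_{i=1}^{N_n} B_{x_i,n}\supseteq K_n$ with $x_i\in K_n$. Fixing compacts $K_n\subseteq K_{n+1}$ with $\bigcup_n K_n=\overline G$ and an arbitrary $w_0\in\mathcal V^+$, I define recursively
$$ w_n=\bigwedge_{i=1}^{N_n} w_{x_i,n}\wedge w_{n-1}, $$
which lies in $\mathcal V^+$ by Lemma \ref{lem:3} and satisfies $w_n\le w_{n-1}$ by construction. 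On $K_n$ we then have $w_n < h_n + 1/n$.

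Finally, the convergence is read off just as in Lemma \ref{lem:2}: given $x\in\overline G$ and $\varepsilon>0$, pick $N(x)$ so that $x\in K_n$, $1/n<\varepsilon/2$ and $h_n(x)-w_+(x)<\varepsilon/2$ for all $n\ge N(x)$; then $w_+(x)\le w_n(x) < h_n(x)+1/n < w_+(x)+\varepsilon$ for $n\ge N(x)$, the lower bound holding because $w_n$ is a minimum of members of $\mathcal V^+$, each of which dominates $w_+$. Hence $w_n(x)\searrow w_+(x)$, and the proof is complete. The only point that requires a little care is the invocation of Beer's theorem for the \emph{upper} semicontinuous function $w_+$, which is handled by passing to $-w_+$; beyond that, I expect no genuine obstacle, since the substantive work — directedness downward (Lemma \ref{lem:3}) and the semicontinuity of $w_+$ — has already been done, and the remaining argument is the verbatim mirror image of Lemma \ref{lem:2}.
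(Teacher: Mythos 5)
Your proposal is correct and is precisely the argument the paper intends: the text states only that the proof of Lemma \ref{lem:4} is ``an evident modification'' of that of Lemma \ref{lem:2}, and your mirror-image construction (Beer's theorem applied to $-w_+$, finite subcovers, and downward directedness via Lemma \ref{lem:3}) carries out exactly that modification. All steps check out.
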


The proof of Lemma \ref{lem:4} is an evident modification of that of Lemma \ref{lem:2}. The proof of the next theorem is similar to that of Theorem \ref{th:2} and contains some inevitable repetition.

\begin{theorem} \label{th:3}
The function
$$ w_+(x)=\inf\limits_{w\in\mathcal V^+} w(x)$$
is a viscosity subsolution of (\ref{eq:2.3}), (\ref{eq:2.4}).
\end{theorem}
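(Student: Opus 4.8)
The plan is to imitate, with every inequality reversed, the proof of Theorem~\ref{th:2}, arguing by contradiction. Suppose $w_+$ is not a viscosity subsolution at some $x_0\in\partial G$; the interior case $x_0\in G$ is handled identically, only without the boundary alternative. Then there are $\varphi\in C^2(\overline G)$ and $\varepsilon>0$ with $\varphi(x_0)=w_+(x_0)$, $w_+<\varphi$ on $(\overline{B_\varepsilon(x_0)}\setminus\{x_0\})\cap\overline G$, and
\[
\min\{\beta\varphi(x_0)-H(x_0,\varphi_x(x_0),\varphi_{xx}(x_0)),\ \varphi(x_0)-g(x_0)\}>0 .
\]
The first difference with Theorem~\ref{th:2} appears here: because $H$ is a supremum over the \emph{compact} set $A$, the negation now produces not one ``bad'' control but the pair of inequalities $\varphi(x_0)>g(x_0)$ and $\beta\varphi(x_0)-(\mathcal L^a\varphi)(x_0)-f(x_0,a)>0$ for \emph{every} $a\in A$, with $\mathcal L^a$ as in (\ref{eq:3.3}). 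Using continuity of $b,\sigma,f,g$ and compactness of $A$, I would shrink $\varepsilon$ so that $\varphi>g$ on $B_\varepsilon(x_0)\cap\partial G$ and, uniformly in $a\in A$, $\beta\varphi(x)-(\mathcal L^a\varphi)(x)-f(x,a)>0$ on $B_\varepsilon(x_0)\cap\overline G$ --- the dual of (\ref{eq:3.4}), (\ref{eq:3.5}).

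Next, using the \emph{upper} semicontinuity of $w_+$ and the \emph{decreasing} approximating sequence from Lemma~\ref{lem:4} (in place of Lemma~\ref{lem:2}), and the same compact-shell/finite-intersection argument as in the proof of Theorem~\ref{th:2}, I would obtain a number $\delta'>0$ and a stochastic supersolution $w\in\mathcal V^+$ with $w<\varphi-\delta'$ on $S_\varepsilon=(\overline B_\varepsilon(x_0)\setminus B_{\varepsilon/2}(x_0))\cap\overline G$. Choosing $\eta\in(0,\delta')$ small enough that the localized inequalities above persist for $\varphi^\eta:=\varphi-\eta$ (note $\mathcal L^a\varphi^\eta=\mathcal L^a\varphi$), and hence $w<\varphi^\eta$ on $S_\varepsilon$, I would set $w^\eta=\varphi^\eta\wedge w$ on $B_\varepsilon(x_0)\cap\overline G$ and $w^\eta=w$ elsewhere. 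Then $w^\eta\in C_b(\overline G)$, $w^\eta\ge g$ on $\partial G$, $w^\eta\le w$ everywhere, and since $w(x_0)\ge w_+(x_0)=\varphi(x_0)>\varphi^\eta(x_0)$ we get $w^\eta(x_0)=\varphi^\eta(x_0)=w_+(x_0)-\eta<w_+(x_0)$. Hence it suffices to show $w^\eta\in\mathcal V^+$, for this contradicts the definition of $w_+$ and proves the theorem.

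The remaining (main) step is to verify (\ref{eq:4.1}) for $w^\eta$. Here the structure is actually lighter than in Theorem~\ref{th:2}, since supersolutions are tested against \emph{all} controls and no ``suitable'' control need be built: I would fix a randomized initial condition $(\tau,\xi)$, an arbitrary $\alpha\in\mathcal A$, and $\rho\in[\tau,\sigma^{\tau,\xi,\alpha}]$, put $U=B_{\varepsilon/2}(x_0)\cap\{\varphi^\eta<w\}$, $\Gamma=\{\xi\in U\}\in\mathscr F_\tau$, and define $\tau_0=\inf\{t\ge\tau:X^{\tau,\xi,\alpha}_t\in\partial B_{\varepsilon/2}(x_0)\}\wedge\sigma^{\tau,\xi,\alpha}$ and $\tau_1=\tau_0 I_{\{\tau_0<\sigma^{\tau,\xi,\alpha}\}}+(+\infty)I_{\{\tau_0=\sigma^{\tau,\xi,\alpha}\}}$, but now keeping $\alpha$ throughout. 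On $\{\rho>\tau_1\}$ one has $\tau_1<\infty$ and the restart point $\xi_1=X^{\tau,\xi,\alpha}_{\tau_1}$ lies on $\partial B_{\varepsilon/2}(x_0)\cap\overline G\subset S_\varepsilon$, where $w^\eta=w$ (this is where $\eta<\delta'$ is used); combining $w^\eta\le w$, the pathwise-uniqueness identities $X^{\tau,\xi,\alpha}=X^{\tau_1,\xi_1,\alpha}$ and $\sigma^{\tau,\xi,\alpha}=\sigma^{\tau_1,\xi_1,\alpha}$ on $\{\tau_1<\infty\}$, and (\ref{eq:4.1}) for $w$ at $(\tau_1,\xi_1)$ with the stopping time $\widehat\rho=\rho I_{\{\rho>\tau_1\}}+\sigma^{\tau_1,\xi_1,\alpha}I_{\{\rho\le\tau_1\}}$, I would get $\mathsf E(Z^{\tau,\xi,\alpha}_\rho(w^\eta)I_{\{\rho>\tau_1\}}|\mathscr F_{\tau_1})\le I_{\{\rho>\tau_1\}}Z^{\tau,\xi,\alpha}_{\tau_1}(w^\eta)$, and then --- exactly as in the passage to (\ref{eq:3.9}), (\ref{eq:3.10}), using $\{\rho>\tau_1\}=\{\rho>\tau_0\}$ and $\tau_0=\tau_1$ there --- reduce matters to showing
\[
\mathsf E\bigl(Z^{\tau,\xi,\alpha}_{\rho\wedge\tau_0}(w^\eta)\,\big|\,\mathscr F_\tau\bigr)\le Z^{\tau,\xi,\alpha}_\tau(w^\eta).
\]
For the latter I would split on $\Gamma$ and $\Gamma^c$: on $\Gamma$ the path $(X^{\tau,\xi,\alpha}_s)_{\tau\le s\le\rho\wedge\tau_0}$ never leaves $B_{\varepsilon/2}(x_0)\cap\overline G$, so $w^\eta(X^{\tau,\xi,\alpha}_{\rho\wedge\tau_0})\le\varphi^\eta(X^{\tau,\xi,\alpha}_{\rho\wedge\tau_0})$ there, and It\^o's formula applied to $e^{-\beta s}\varphi^\eta(X^{\tau,\xi,\alpha}_s)$ together with the localized inequality $f(x,a)+(\mathcal L^a\varphi^\eta)(x)-\beta\varphi^\eta(x)<0$ gives $\mathsf E(Z^{\tau,\xi,\alpha}_{\rho\wedge\tau_0}(w^\eta)I_\Gamma|\mathscr F_\tau)\le e^{-\beta\tau}\varphi^\eta(\xi)I_\Gamma=Z^{\tau,\xi,\alpha}_\tau(w^\eta)I_\Gamma$ (the last equality since $w^\eta=\varphi^\eta$ on $U$); while on $\Gamma^c$ one has $w^\eta(\xi)=w(\xi)$ --- the only case to exclude is $\xi\in(B_\varepsilon(x_0)\setminus B_{\varepsilon/2}(x_0))\cap\overline G\subset S_\varepsilon$, where $w<\varphi^\eta$ makes $w^\eta=w$ anyway --- so (\ref{eq:4.1}) for $w$ gives $\mathsf E(Z^{\tau,\xi,\alpha}_{\rho\wedge\tau_0}(w^\eta)I_{\Gamma^c}|\mathscr F_\tau)\le e^{-\beta\tau}w(\xi)I_{\Gamma^c}=Z^{\tau,\xi,\alpha}_\tau(w^\eta)I_{\Gamma^c}$. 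Adding the two and inserting the reduction yields (\ref{eq:4.1}) for $w^\eta$, completing the contradiction.

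I do not anticipate a conceptual obstacle: the argument is the mirror image of the proof of Theorem~\ref{th:2}. The genuinely delicate points are purely bookkeeping --- keeping every inequality correctly oriented after the dualization, the measurability of $\Gamma$, $\tau_0$, $\tau_1$, the exact coincidence $\{\rho\le\tau_1\}=\{\rho\le\tau_0\}$, the pathwise-uniqueness identities for the restarted process, and the repeated use of ``$w^\eta=w$ off $U$ and on $S_\varepsilon$'' which rests entirely on the choice $\eta<\delta'$. The one structural simplification relative to Theorem~\ref{th:2} is that no suitable control has to be constructed or concatenated: the single fixed $\alpha$ serves on every time interval.
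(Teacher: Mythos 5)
Your proposal is correct and follows essentially the same route as the paper: contradiction via a test function $\varphi$, localization of the subsolution inequality (the paper keeps the continuous Hamiltonian $H$ in the localized inequality (\ref{eq:4.3}), which is equivalent to your uniform-in-$a$ version), the construction of $w^\eta=\varphi^\eta\wedge w$ with $w<\varphi^\eta$ on the shell $S_\varepsilon$ via Lemma~\ref{lem:4}, and verification of (\ref{eq:4.1}) by the same $\tau_0,\tau_1,\widehat\rho$ reduction, the split on $\Gamma$, $\Gamma^c$, and It\^o's formula. Your observation that no suitable control needs to be concatenated is exactly the simplification the paper itself points out.
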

\begin{proof}
Let $x_0\in\partial G$. If $w_+$ is not a viscosity subsolution then there exist $\varphi\in C^2(\overline G)$ and $\varepsilon>0$ such that $w_+(x_0)=\varphi(x_0)$, $w_+<\varphi$ on $(\overline{B_\varepsilon(x_0)}\backslash\{0\})\cap\overline G$ and 
$$ \min\{\beta \varphi(x_0) - H(x_0,\varphi_x(x_0),\varphi_{xx}(x_0)),\varphi(x_0)-g(x_0)\} > 0. $$
By the continuity of $H$, $g$, we can assume that
\begin{equation} \label{eq:4.2}
\varphi(x)>g(x),\ \ x\in B_\varepsilon(x_0)\cap \partial G,
\end{equation}
\begin{equation} \label{eq:4.3}
\beta \varphi(x) - H(x,\varphi_x(x),\varphi_{xx}(x))>0,\ \ x\in B_\varepsilon(x_0)\cap \overline G.
\end{equation}
Furthermore, by the upper-semicontinuity of $w_+$ we have
$$ w_+(x)\le\varphi-\delta,\ \ x\in S_\varepsilon:=\left(\overline B_\varepsilon(x_0)\backslash B_{\varepsilon/2}(x_0)\right)\cap\overline G$$
for some $\delta>0$. In the same way as in the proof of Theorem \ref{th:2}, using Lemma \ref{lem:4}, one can show that there exist $w\in\mathcal V^+$ and $\delta'\in (0,\delta)$ such that $w\le\varphi-\delta'$ on $S_\varepsilon$.

Now take an $\eta\in (0,\delta')$ such that (\ref{eq:4.2}), (\ref{eq:4.3}) hold true for $\varphi^\eta=\varphi-\eta$ instead of $\varphi$. We have $w-\varphi^\eta\le -\delta'+\eta<0$ on $S_\varepsilon$.

To get a contradiction it is enough to prove that the function
$$ w^\eta=\left\{\begin{array}{l l}
    \varphi^\eta\wedge w  & \text{on}\quad B_\varepsilon(x_0)\cap \overline{G},\\
    w & \ \text{otherwise} 
  \end{array} \right. $$
is a stochastic supersolution, since $w^\eta(x_0)=\varphi^\eta(x_0)=w_+(x_0)-\eta < w_+(x_0)$, contrary to the definition of $w_+$.

Since $w\in C_b(\overline G)$ and $w\ge g$ on $\partial G$, we only should to prove that (\ref{eq:4.1}) holds true for any randomized initial condition $(\tau,\xi)$, control process $\alpha\in\mathcal A$ and stopping time $\rho\in [\tau,\sigma^{\tau,\xi,\alpha}]$. Put 
\begin{align*}
\tau_0 &= \inf\{t\ge\tau:X^{\tau,\xi,\alpha}_t\in\partial B_{\varepsilon/2}(x_0)\}\wedge\sigma^{\tau,\xi,\alpha},\\
\tau_1 &= \tau_0 I_{\{\tau^0<\sigma^{\tau,\xi,\alpha}\}} + (+\infty) I_{\{\tau^0=\sigma^{\tau,\xi,\alpha}\}},\ \ \xi_1=X^{\tau,\xi,\alpha}_{\tau_1}.
\end{align*}
The process $X^{\tau_1,\xi_1,\alpha}$ is well-defined since $\xi_1 I_{\{\tau_1<\infty\}}\in\partial B_{\varepsilon/2}(x_0)\cap\overline G$ is bounded.

Similar to (\ref{eq:3.6}) we have
\begin{align} \label{eq:4.4}
Z^{\tau,\xi,\alpha}_\rho(w^\eta) I_{\{\rho>\tau_1\}} \le I_{\{\rho>\tau_1\}}\int_\tau^{\tau_1} e^{-\beta s} f(X^{\tau,\xi,\alpha},\alpha_s)\,ds+I_{\{\rho>\tau_1\}} Z^{\tau_1,\xi_1,\alpha}_\rho(w).
\end{align}
Applying Definition \ref{df:4} to the stopping time
$$ \widehat\rho=\rho I_{\{\rho>\tau_1\}}+\sigma^{\tau_1,\xi_1,\alpha} I_{\{\rho\le \tau_1\}}\in [\tau_1,\sigma^{\tau_1,\xi_1,\alpha}],$$
we obtain
\begin{align} \label{eq:4.5}
\mathsf E(Z^{\tau_1,\xi_1,\alpha}_\rho(w) I_{\{\rho>\tau_1\}}|\mathcal F_{\tau_1}) & =\mathsf E(Z^{\tau_1,\xi_1,\alpha}_{\widehat\rho}(w) I_{\{\rho>\tau_1\}}|\mathcal F_{\tau_1})\le I_{\{\rho>\tau_1\}} Z^{\tau_1,\xi_1,\alpha}_{\tau_1}(w)\nonumber\\
 & = I_{\{\rho>\tau_1\}} e^{-\beta\tau_1}w(\xi_1)=I_{\{\rho>\tau_1\}} e^{-\beta\tau_1}w^\eta(\xi_1).
\end{align}
The last equality follows from the fact that $w<\varphi^\eta$ on $\partial B_{\varepsilon/2}(x_0)\cap \overline G\subset S_\varepsilon$, while $\xi_1\in \partial B_{\varepsilon/2}(x_0)\cap \overline G$ on the set 
\begin{equation} \label{eq:4.6}
\{\rho>\tau_1\}=\{\rho>\tau_0\}.
\end{equation}
From (\ref{eq:4.4}), (\ref{eq:4.5}) it follows that
\begin{align} \label{eq:4.7}
\mathsf E(Z^{\tau,\xi,\alpha}_\rho(w^\eta) I_{\{\rho>\tau_1\}}|\mathcal F_{\tau_1}) 
&\le I_{\{\rho>\tau_1\}}\left(\int_\tau^{\tau_1} e^{-\beta s} f(X^{\tau,\xi,\alpha},\alpha_s)\,ds+e^{-\beta\tau_1}w^\eta(\xi_1)\right)\nonumber\\
&= I_{\{\rho>\tau_1\}} Z^{\tau,\xi,\alpha}_{\tau_1}(w^\eta).
\end{align}

Since, moreover, $\tau_0=\tau_1$ on the set (\ref{eq:4.6}), by (\ref{eq:4.7}) we have
\begin{align} \label{eq:4.8}
\mathsf E(Z^{\tau,\xi,\alpha}_\rho(w^\eta)|\mathcal F_{\tau}) &=\mathsf E(I_{\{\rho\le\tau_1\}}Z^{\tau,\xi,\alpha}_\rho(w^\eta)|\mathcal F_{\tau})+\mathsf E(I_{\{\rho>\tau_1\}}\mathsf E(Z^{\tau,\xi,\alpha}_\rho(w^\eta)|\mathcal F_{\tau_1})|\mathcal F_\tau)\nonumber\\
& \le \mathsf E(I_{\{\rho\le\tau_0\}}Z^{\tau,\xi,\alpha}_\rho(w^\eta)|\mathcal F_{\tau})+\mathsf E(I_{\{\rho>\tau_0\}} Z^{\tau,\xi,\alpha}_{\tau_0}(w^\eta)|\mathcal F_{\tau})\nonumber\\
& =\mathsf E(Z^{\tau,\xi,\alpha}_{\rho\wedge\tau_0}(w^\eta)|\mathcal F_\tau).
\end{align}

Put 
$$ U=B_{\varepsilon/2}(x_0)\cap\{x\in\overline G:\varphi^\eta(x)<w(x)\},\ \ \Gamma=\{\xi\in U\}\in\mathscr F_\tau.$$ 
On the stochastic interval $\Lbrack\tau_\Gamma,(\rho\wedge\tau_0)_\Gamma\Rbrack$ the trajectories of $X^{\tau,\xi,\alpha}$ do not leave the set $B_{\varepsilon/2}(x_0)\cap \overline G$. Hence, we have $w^\eta(X^{\tau,\xi,\alpha}_{\rho\wedge\tau_0})I_\Gamma\le\varphi^\eta(X^{\tau,\xi,\alpha}_{\rho\wedge\tau_0})I_\Gamma$ and
\begin{equation} \label{eq:4.9}
Z^{\tau,\xi,\alpha}_{\rho\wedge\tau_0}(w^\eta)\le Z^{\tau,\xi,\alpha}_{\rho\wedge\tau_0}(\varphi^\eta) I_\Gamma+Z^{\tau,\xi,\alpha}_{\rho\wedge\tau_0}(w) I_{\Gamma^c}.
\end{equation}

Furthermore, by Ito's formula,
\begin{align*}
Z^{\tau,\xi,\alpha}_{\rho\wedge\tau_0}(\varphi^\eta) &=\int_\tau^{\rho\wedge\tau_0} e^{-\beta s} f(X^{\tau,\xi,\alpha},\alpha_s)\,ds+e^{-\beta \rho\wedge\tau_0}\varphi^\eta(X^{\tau,\xi,\alpha}_{\rho\wedge\tau_0})\\
&=e^{-\beta\tau}\varphi^\eta(\xi)+\int_\tau^{\rho\wedge\tau_0} e^{-\beta s}\left[f(X_s^{\tau,\xi,\alpha},\alpha_s)+(\mathcal L^\alpha\varphi^\eta-\beta\varphi^\eta)(X^{\tau,\xi,\alpha}_s)\right]\,ds\\
&+\int_\tau^{\rho\wedge\tau_0}e^{-\beta s}\varphi^\eta_x(X_s^{\tau,\xi,\alpha})\cdot\sigma(X_s^{\tau,\xi,\alpha},\alpha_s)\,dW_s,
\end{align*}
where the infinitesimal generator $\mathcal L^a$ is defined by (\ref{eq:3.3}). 
Taking the conditional expectation, and using (\ref{eq:4.3}), we get
\begin{equation} \label{eq:4.10}
 \mathsf E(Z^{\tau,\xi,\alpha}_{\rho\wedge\tau_0}(\varphi^\eta) I_\Gamma|\mathscr F_\tau)\le e^{-\beta\tau}\varphi^\eta(\xi) I_\Gamma=e^{-\beta\tau} w^\eta(\xi) I_\Gamma=Z^{\tau,\xi,\alpha}_\tau(w^\eta) I_\Gamma.
\end{equation}
Moreover,
\begin{equation} \label{eq:4.11}
 \mathsf E(Z^{\tau,\xi,\alpha}_{\rho\wedge\tau_0}(w) |\mathscr F_\tau)I_{\Gamma^c} \le Z_\tau^{\tau,\xi,\alpha}(w) I_{\Gamma^c}=Z_\tau^{\tau,\xi,\alpha}(w^\eta) I_{\Gamma^c}.
\end{equation}
The desired inequality 
$$ \mathsf E(Z^{\tau,\xi,\alpha}_\rho(w^\eta) |\mathscr F_\tau)\le Z_\tau^{\tau,\xi,\alpha}(w^\eta).$$
follows from (\ref{eq:4.10}), (\ref{eq:4.11}), combined with (\ref{eq:4.9}), (\ref{eq:4.8}).

The case of interior point $x_0\in G$ is simpler and we omit details.  
\end{proof}

\section{Proof of Theorem \ref{th:1} and final remarks}
\label{sec:5}
\begin{proof}[Proof of Theorem \ref{th:1}]
From Theorems \ref{th:2}, \ref{th:3} and inequalities $u\le v\le w$, $u\in\mathcal V^-$, $w\in\mathcal V^+$ it follows that
$$ u_-(x)\le v(x)\le w_+(x),\ \ x\in\overline G,$$
where $u_-$ is a viscosity supersolution and $w_+$ is a viscosity subsolution of (\ref{eq:2.3}), (\ref{eq:2.4}). By the SRC we have $w_+\le u_-$ on $G$. It follows that $u_-(x)=v(x)=w_+(x)$, $x\in G$ and these functions are continuous on $G$. Moreover, by the SRC, $w_+\le \tilde v_*$, $\tilde v^*\le u_-$ on $G$ for any viscosity solution $\tilde v$ of (\ref{eq:2.3}), (\ref{eq:2.4}). Hence, $v=\tilde v$ on $G$, as stated by Theorem 1. 
\end{proof}

Note that $v$ is a viscosity solutions of (\ref{eq:2.3}), but the previous argumentation, in general, does not imply that $v$ satisfies (\ref{eq:2.4}) in the viscosity sense (see Definition \ref{df:1}). 
However, if we assume the \emph{strong uniqueness} property (see \cite{BarSou91}): $\widetilde u(x)\le\widetilde w(x)$, $x\in\overline G$ for any viscosity subsolution $\widetilde u$ and supersolution  $\widetilde w$ of (\ref{eq:2.3}), (\ref{eq:2.4}), then $u_-=v=w_+$ on $\overline G$ and $v$ is a continuous viscosity solution of (\ref{eq:2.3}), (\ref{eq:2.4}). In this case the Dirichlet boundary condition is attained in the classical sense: $v=g$ on $\partial G$. The strong uniqueness property is satisfied if the diffusion matrix does not degenerate along the normal direction to the boundary (see e.g. \cite{BarRou98}, \cite{Cha04}).

One way to prove that $v$ is a viscosity solution of (\ref{eq:2.3}), (\ref{eq:2.4}) in general case is to apply the dynamic programming principle. Although we do not pursue this route here, we present a short proof of the validity of the DPP for $\widehat G=G$. 

Take a stochastic subsolution $u$ and an $u$-suitable control process $\widehat\alpha\in\mathcal A$ for the initial condition $(0,x)$, $x\in\overline G$. By Definition \ref{df:3} we have:
$$ u(x)\le\mathsf E\left[\int_0^\rho e^{-\beta s} f(X_s^{x,\widehat\alpha},\widehat\alpha_s)\,ds+e^{-\beta\rho} u(X_\rho^{x,\widehat\alpha})\right],\ \ 0\le\rho\le\sigma^{x,\widehat\alpha}.$$
In view of the inequality $u\le v$ we get 
$$ u_-(x)=\sup_{u\in\mathcal V^-} u(x)\le\sup_{\alpha\in\mathcal A} \inf_{0\le\rho\le\sigma^{x,\alpha}} \mathsf E\left[\int_0^\rho e^{-\beta s} f(X_s^{x,\alpha},\alpha_s)\,ds+e^{-\beta\rho} v(X_\rho^{x,\alpha})\right]$$
for $x\in\overline G$. On the other hand, for a stochastic supersolution $w$ by Definition \ref{df:4} we have
$$ w(x)\ge \mathsf E\left[\int_0^\rho e^{-\beta s} f(X_s^{x,\alpha},\alpha_s)\,ds+e^{-\beta\rho} w(X_\rho^{x,\alpha})\right],\ \ 0\le\rho\le\sigma^{x,\alpha}, \ \ \alpha\in\mathcal A.$$
With the use of the inequality $v\le w$ we get
$$ w_+(x)=\inf_{w\in\mathcal V^+} w(x)\ge\sup_{\alpha\in\mathcal A} \sup_{0\le\rho\le\sigma^{x,\alpha}} \mathsf E\left[\int_0^\rho e^{-\beta s} f(X_s^{x,\alpha},\alpha_s)\,ds+e^{-\beta\rho} v(X_\rho^{x,\alpha})\right]$$
for $x\in\overline G$.

The equality $u_-(x)=v(x)=w_+(x)$, $x\in G$ now implies that $v$ satisfies the dynamic programming principle:
\begin{align*}
v(x) &=\sup_{\alpha\in\mathcal A} \sup_{0\le\rho\le\sigma^{x,\alpha}} \mathsf E\left[\int_0^\rho e^{-\beta s} f(X_s^{x,\alpha},\alpha_s)\,ds+e^{-\beta\rho} v(X_\rho^{x,\alpha})\right]\\
&=\sup_{\alpha\in\mathcal A} \inf_{0\le\rho\le\sigma^{x,\alpha}}\mathsf E\left[\int_0^\rho e^{-\beta s} f(X_s^{x,\alpha},\alpha_s)\,ds+e^{-\beta\rho} v(X_\rho^{x,\alpha})\right]
\end{align*}
for $x\in G$. But for $x\in\partial G$ these relations are trivially satisfied since $\sigma^{x,\alpha}=0$ in the case $\widehat G=G$.

\end{document}